\documentclass{amsart}
\usepackage{amssymb}

\theoremstyle{plain}
\newtheorem{theorem}{Theorem}[section]
\newtheorem{lemma}[theorem]{Lemma}

\newtheorem{corollary}[theorem]{Corollary}

\newtheorem{example}[theorem]{Example}
\theoremstyle{definition}

\numberwithin{equation}{section}

\newcommand{\thmlabel}[1]{\label{thm:#1}} 
\newcommand{\lemlabel}[1]{\label{lem:#1}} 
\newcommand{\corlabel}[1]{\label{cor:#1}} 
\newcommand{\seclabel}[1]{\label{sec:#1}} 
\newcommand{\eqnlabel}[1]{\label{eqn:#1}} 

\newcommand{\thmref}[1]{\ref{thm:#1}}           
\newcommand{\lemref}[1]{\ref{lem:#1}}           
\newcommand{\corref}[1]{\ref{cor:#1}}           
\newcommand{\eqnref}[1]{\eqref{eqn:#1}}     

\newcommand{\sblp}[1]{\langle #1 \rangle}   
\newcommand{\OR}{\mbox{ or }}
\newcommand{\AND}{\mbox{ and }}
\newcommand{\define}[1]{\emph{#1}}

\newcommand{\Jordan}{\textsc{J}}

\title{Powers of elements in Jordan loops}

\author[K. Pula]{Kyle Pula}
\email[Pula]{jpula@math.du.edu}

\address{Department of Mathematics \\
University of Denver \\
2360 S Gaylord St \\
Denver, CO 80208, U.S.A.}

\begin{document}

\keywords{Jordan loop, Jordan quasigroup, well-defined powers, nonassociative loop, order of a loop}

\subjclass[2000]{20N05}

\begin{abstract}
A Jordan loop is a commutative loop satisfying the Jordan identity 
$(x^2 y ) x = x^2 ( y x)$. We establish several identities involving powers in Jordan loops
and show that there is no nonassociative Jordan loop of order $9$.
\end{abstract}
\maketitle

\section{Introduction}

A magma $(Q,\cdot)$ is a \emph{quasigroup} if, for each $a,b\in Q$,
the equations $ax=b \AND ya=b$ have unique solutions $x,y\in Q$. A
\emph{loop} is a quasigroup with a neutral element, which we denote $e$. Standard
references on quasigroup and loop theory are \cite{Bruck,pflug}. 
A commutative loop is said to be \emph{Jordan} if it satisfies
the \emph{Jordan identity}
\begin{equation}\eqnlabel{Jordan}
    x^2(yx) = (x^2y)x   \tag{\Jordan}.
\end{equation}

Kinyon, Pula, and Vojt\v{e}chovsk\'y \cite{Admissible_Orders} showed that 
there exists a nonassociative (that is, not associative) Jordan loop
of order $n$ if and only if $n \geq 6$ and $n \neq 9$. 

For the order $9$ case, their work relied upon
an exhaustive computer search. In this paper, we establish several
identities involving powers in Jordan loops and present a more
``human-sized'' proof that there are no nonassociative Jordan loops of order $9$.

\section{Powers of Elements}
\seclabel{order9}

We write $x^k$ for the right associated term $L_x ^k (e) = x (x ( \cdots (xe) \cdots ))$. We say that \define{$x^k$ is well-defined} if the value of this term does not depend on how it is associated.

\begin{lemma}
\lemlabel{SmallPowers}
If $Q$ is a Jordan loop and $x \in Q$, then $x^3, x^4, \AND x^5$ are well-defined.
\end{lemma}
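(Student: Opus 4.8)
The plan is a short induction on $k$, exploiting the fact that commutativity collapses most parenthesizations. Fix $x\in Q$. A value of $x^k$ ``associated in some way'' is obtained by parenthesizing the term $x\,x\cdots x\,e$ with $k$ factors $x$; since $ye=y$, the trailing $e$ is absorbed the first time it is combined with anything, so such a value is the same as one obtained by parenthesizing $k$ copies of $x$. Each such parenthesization corresponds to a binary tree with $k$ leaves carrying $x$, and since $Q$ is commutative, swapping the two children at any internal node leaves the product unchanged; hence the value depends only on the \emph{unordered} tree, and in particular only on the unordered pair of subtree values at the root, one a product of $j$ copies of $x$ and the other a product of $k-j$ copies, for some $1\le j\le\lfloor k/2\rfloor$. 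For $k=3$ the only options are $(xx)x$ and $x(xx)$, equal by commutativity, so $x^3$ is well-defined.

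For $k=4$ the possibilities for $j$ are $1$ and $2$. Since $x^3$ is already well-defined, $j=1$ yields $x\cdot x^3 = x^3\cdot x$ and $j=2$ yields $x^2 x^2$, so it suffices to show $x^3 x = x^2 x^2$. This is precisely \eqnref{Jordan} with $y:=x$ (together with $x^2 x = x^3$), so $x^4$ is well-defined.

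For $k=5$ the possibilities for $j$ are again $1$ and $2$. Since $x^3$ and $x^4$ are now known to be well-defined, the only values that can occur are $x\cdot x^4$ and $x^2\cdot x^3$, so it remains to show these agree. Applying \eqnref{Jordan} with $y:=x^2$ gives $x^2(x^2 x) = (x^2 x^2)x$, i.e.\ $x^2 x^3 = x^4 x = x x^4$, which completes the proof.

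I do not expect a genuine obstacle here. The whole argument is powered by the observation that, once $x^3,\dots,x^{k-1}$ are known to be well-defined, establishing well-definedness of $x^k$ reduces to comparing the finitely many ``root-split'' products $x^j\cdot x^{k-j}$, and for $k\le 5$ each comparison that arises is a single instance of the Jordan identity (with $y=x$ when $k=4$ and $y=x^2$ when $k=5$). The only thing needing care is bookkeeping the parenthesizations, and organizing the count by the size $j$ of the smaller part at the root, as above, makes it clear that none is missed. (For $k\ge 6$ new comparisons appear that are not immediate consequences of \eqnref{Jordan}---already at $k=6$ one must relate $x^3 x^3$ to $x^2 x^4$---which presumably explains the cutoff at $x^5$.)
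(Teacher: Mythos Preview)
Your proof is correct and follows essentially the same route as the paper: both reduce to the two computations $x^3\cdot x = x^2 x^2$ (the Jordan identity with $y=x$) and $x^4\cdot x = x^2 x^3$ (the Jordan identity with $y=x^2$). You supply more explicit bookkeeping for why it suffices to compare the root splits $x^j\cdot x^{k-j}$, which the paper leaves implicit.
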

\begin{proof}
Third powers are well-defined in any commutative loop. For the fourth power, $x^3 x = x^2 x \cdot x = x^2 \cdot xx = x^2 x^2$. For the fifth power, $x^4 x = x^2 x^2 \cdot x = x^2 \cdot x^2 x = x^2 x^3$.
\end{proof}

\begin{lemma}
\lemlabel{Power2Identities}
The following identities hold in any Jordan loop:
\begin{enumerate}
\item $x^n x^2 = x^{n+2}$
\item $x^n x^4 = x^{n+4}$
\item $x^n x^8 = x^{n+8}$ if $n \not\equiv 3 \mod 4$ or $x^3 x^8 = x^{11}$

\item $x^n x^{2^k} = x^{n + 2^k}$ if $n \equiv 2^m \mod 2^{k-1}$ for $0 \leq m \leq (k-1)$.

\item $x^{2^n} = (x^{2^{n-1}})^2$

\end{enumerate}
\end{lemma}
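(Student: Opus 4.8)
The plan is to run a chain of inductions, each exploiting the single instance of \eqnref{Jordan} obtained by substituting a square for $x$: since $(x^{2^j})^2 = x^{2^{j+1}}$ (this is part (v), which we establish along the way), applying \eqnref{Jordan} to the element $x^{2^j}$ gives
\[
(x^{2^{j+1}} y)\,x^{2^j} = x^{2^{j+1}}\,(y\,x^{2^j}) \qquad (j\ge 0),
\]
and for $j=0$ this is \eqnref{Jordan} itself. Part (i) comes first and directly: induct on $n$, writing $x^{n+1}=x^n x$ by commutativity and applying \eqnref{Jordan} with $y=x^n$ to get $x^2 x^{n+1}=x^2(x^n x)=(x^2 x^n)x=x^{n+2}x=x^{n+3}$, the third equality being the induction hypothesis; the cases $n\le 2$ are Lemma~\lemref{SmallPowers}, and commutativity then gives (i).

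Part (ii) and the $n=3$ instance of (v) follow quickly. For $x^8=(x^4)^2$: by (i) we have $(x^4 x^2)x^2 = x^6 x^2 = x^8$, while applying \eqnref{Jordan} to $x^2$ gives $(x^4 x^2)x^2 = x^4(x^2 x^2)=x^4 x^4$. For (ii), induct on $n$ in steps of two using $x^4 x^{n+2} = x^4(x^n x^2) = (x^4 x^n)x^2 = x^{n+4}x^2 = x^{n+6}$ — the outer equalities from (i), the middle one from \eqnref{Jordan} applied to $x^2$ — with base cases $n=1,2$ from Lemma~\lemref{SmallPowers} and (i).

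The main content is a single induction on $k$ that simultaneously yields part (iv) at level $k$ (i.e.\ for the exponent $2^k$) and part (v) at level $k+1$. Suppose (iv) holds at all levels below $k$ and (v) at all levels $\le k$. For (iv) at level $k$, first check the base exponents $n=2^m$ ($0\le m\le k-1$): each $x^{2^m}x^{2^k}=x^{2^k}x^{2^m}$ by commutativity, which is trivial for $m=0$ and otherwise an instance of (iv) at the lower level $m$ (with exponent $2^k$, legitimate since $2^k\equiv 2^{m-1}\equiv 0 \pmod{2^{m-1}}$). Then push $n$ upward in steps of $2^{k-1}$ using the displayed identity with $j=k-1$ (valid because $(x^{2^{k-1}})^2=x^{2^k}$ by (v) at level $k$) together with (iv) at level $k-1$, checking that the hypothesis $n\equiv 2^m\pmod{2^{k-1}}$ keeps $n$, $n+2^{k-1}$, and $n+2^k$ admissible exponents for (iv) at level $k-1$. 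For (v) at level $k+1$, take the same displayed identity with $j=k-1$ and $y=x^{2^{k-1}}$: the right side collapses to $(x^{2^k})^2$ via (v) at level $k$, while the left side, $(x^{2^k}x^{2^{k-1}})x^{2^{k-1}}$, telescopes to $x^{2^{k+1}}$ by two applications of (iv) at level $k-1$. Part (iii)'s first clause is exactly (iv) at $k=3$.

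The remaining assertion $x^3 x^8 = x^{11}$ in (iii) is the one point that escapes this machinery, since $3\equiv 3\pmod 4$ and (iv) at level $3$ does not reach it; I expect this to be the hard part. It does not appear to be a formal consequence of (i), (ii), (iv), (v) and a single application of \eqnref{Jordan}, so it will need a dedicated computation — presumably unwinding $x^8=(x^4)^2$, $x^4=(x^2)^2$ and chaining several instances of \eqnref{Jordan} with non-power substitutions. One cheap observation that shows what is really at stake: applying \eqnref{Jordan} to $x^4$ with $y=x^3$ gives $(x^3 x^8)x^4 = x^8(x^3 x^4)=x^8 x^7$, so via part (ii) and cancellation the claim $x^3 x^8 = x^{11}$ is equivalent to $x^7 x^8 = x^{15}$; hence the genuine difficulty is breaking the symmetry among the ``bad'' exponents $n\equiv 3\pmod 4$, and this is where I would expect to spend the most effort.
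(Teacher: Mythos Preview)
Your treatments of (i), (ii), (iv), and (v) are correct and are essentially the paper's own: the paper runs the same one-step induction for (i), the same step-of-$2$ induction using \eqnref{Jordan} at $x^2$ for (ii), and the same double induction for (iv), with (v) recovered as the instance J$(2^{n-1},n-1)$ of (iv). The minor organizational difference---you prove (v) at level $k+1$ alongside (iv) at level $k$, whereas the paper extracts (v) afterward from the already-established J$(\cdot,k-1)$---is cosmetic.

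The one real problem is a misreading of (iii). The clause ``or $x^3 x^8 = x^{11}$'' is not a second assertion to be proved; it is an alternative \emph{hypothesis}. The intended reading is: $x^n x^8 = x^{n+8}$ holds whenever $n\not\equiv 3\pmod 4$, \emph{or} whenever (for this particular $x$) one already knows $x^3 x^8 = x^{11}$. The paper's proof says exactly this (``$n=3$ holds by assumption''), and the step-of-$4$ induction then propagates the identity from $n=3$ to every $n\equiv 3\pmod 4$. Later results keep the same conditional form (Lemma~\lemref{InverseIdentities}(iii)), and Theorem~\thmref{NonWellDefinedPowers} indicates that $x^3 x^8 = x^{11}$ is not a universal Jordan identity, so the ``hard part'' you anticipate simply is not part of the lemma. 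What remains for you in (iii) is only the easy observation that, once $x^3 x^8 = x^{11}$ is taken as given, your own step-of-$2^{k-1}$ induction at $k=3$ already covers the residue class $3\pmod 4$.
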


\begin{proof}
(i) This is trivial for $n=0$. Assuming the identity holds for $n-1$ and using \eqnref{Jordan}, $x^n x^2 = x x^{n-1} \cdot x^2 = x^2 x^{n-1} \cdot x = x^{n+1} x = x^{n +2}$.

(ii) This is trivial for $n=0 \AND n=1$. Assuming the identity holds for $n-2$ and using (i) and \eqnref{Jordan}, $x^n x^4 = x^{n-2} x^2 \cdot x^4 = x^{n-2} x^4 \cdot x^2 = x^{n+2} x^2 = x^{n+4}$.

(iii) This is trivial for $n=0 \AND n=1$ while $n=2$ follows from (i) and $n=3$ holds by assumption. Assuming the identity holds for $n-4$, $x^n x^8 = x^{n-4} x^4 \cdot x^8 = x^{n-4} x^8 \cdot x^4 = x^{n+4} x^4 = x^{n+8}$, using (ii) and \eqnref{Jordan}. By induction, the identity holds for all $n \not \equiv 3 \mod 4$ and if the identity holds for $n=3$, then it holds for all $n$.

(iv) We say J$(n,k)$ holds if (iv) holds for $n$ and $k$. Note that for $k=1,2, \AND 3$, J$(n,k)$ is a special case of (i), (ii), and (iii). Assume that J$(m,i)$ holds for all $m$ and for all $i < k$ and consider J$(n,k)$. For $n=2^m$ where $0 \leq m \leq (k-1)$, the identity J$(n,k)$ is $x^{2^m} x^{2^k} = x^{2^m + 2^k}$ but, in the presence of commutativity, this identity is also J$(2^k,m)$. Since $m < k$, J$(2^k,m)$ holds by our induction assumption.

We now keep $k$ fixed and induct on $n$. Assume that $n \equiv 2^m \mod 2^{k-1}$ for $0 \leq m \leq (k-1)$ and that J$(n-2^{k-1}, k)$ holds. Note that if $n \equiv 2^m \mod 2^{k-1}$, then $n-2^{k-1} \equiv 2^m \mod 2^{k-2}$ and thus it follows from J$(n-2^{k-1},k-1)$ that $x^n = x^{n-2^{k-1}} x^{2^{k-1}}$ and by J$(2^{k-1}, k-1)$, we have $x^{2^k} = x^{2^{k-1}} x^{2^{k-1}} = (x^{2^{k-1}})^2$.

Therefore, we have:
\begin{align*}
x^n x^{2^k} 
&= x^{n-2^{k-1}} x^{2^{k-1}} \cdot (x^{2^{k-1}})^2 & \mbox{J}(n-2^{k-1},k-1) \AND \mbox{J}(2^{k-1}, k-1) \\
&= x^{n-2^{k-1}} (x^{2^{k-1}})^2 \cdot x^{2^{k-1}} & \eqnref{Jordan} \\
&= x^{n-2^{k-1}} x^{2^{k}} \cdot x^{2^{k-1}} & \mbox{J}(2^{k-1}, k-1) \\
&= x^{n+2^{k-1}} x^{2^{k-1}} & \mbox{J}(n - 2^{k-1}, k) \AND n-2^{k-1} \equiv 2^m \mod 2^{k-1} \\
&= x^{n+2^k}
\end{align*}
The final line follows since J$(n+2^{k-1}, k-1)$ holds and $n+2^{k-1} \equiv 2^m \mod 2^{k-2}$.

(v) This is just the identity J$(2^{n-1}, n-1)$, which applies since $2^{n-1} \equiv 0 \mod 2^{n-2}$. 
\end{proof}

\begin{corollary}
If $Q$ is a Jordan loop and $x \in Q$, then $x^{n} = x^{1 \cdot a_0} ( x^{2 \cdot a_1} ( \cdots (x^{2^k \cdot a_k}) )$ where $a_k \ldots a_0$ is the binary expansion of $n$.
\end{corollary}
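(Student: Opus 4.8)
The plan is to induct on $n$ --- equivalently, on the number of nonzero binary digits of $n$ --- peeling off the \emph{lowest} set bit at each step. Write $n = 2^{i_1} + 2^{i_2} + \dots + 2^{i_r}$ with $i_1 < i_2 < \dots < i_r$; since $x^0 = e$, the product in the statement is exactly $x^{2^{i_1}}(x^{2^{i_2}}(\cdots(x^{2^{i_r}})\cdots))$, so the zero digits contribute nothing and may be ignored. The cases $n = 0$ and $r = 1$ are immediate.

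The crucial step is the identity
\[
x^n = x^{2^{i_1}}\,x^{\,n - 2^{i_1}}.
\]
If $i_1 = 0$ this is just the defining recursion $x^n = x\,x^{n-1}$. If $i_1 \geq 1$, I would apply Lemma~\lemref{Power2Identities}(iv) with $k = i_1$ to the element $x^{\,n - 2^{i_1}}$: because every remaining bit $i_2,\dots,i_r$ exceeds $i_1$, the exponent $n - 2^{i_1}$ is divisible by $2^{i_1}$, hence $n - 2^{i_1} \equiv 0 \equiv 2^{i_1 - 1} \pmod{2^{i_1 - 1}}$, so the hypothesis of~(iv) holds with $m = i_1 - 1$ and we get $x^{\,n - 2^{i_1}} x^{2^{i_1}} = x^n$. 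By commutativity this is the displayed identity.

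Now $n - 2^{i_1} = 2^{i_2} + \dots + 2^{i_r}$ has fewer nonzero digits, and its lowest set bit is $2^{i_2}$, so the induction hypothesis gives $x^{\,n - 2^{i_1}} = x^{2^{i_2}}(x^{2^{i_3}}(\cdots(x^{2^{i_r}})\cdots))$. Substituting into the displayed identity yields $x^n = x^{2^{i_1}}(x^{2^{i_2}}(\cdots(x^{2^{i_r}})\cdots))$, which is the claim.

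I do not expect a serious obstacle here, since the real content is already packaged in Lemma~\lemref{Power2Identities}. The only points needing a moment's care are the verification that the congruence condition in part~(iv) is met --- the relevant trick being that the residue $0$ is admissible because it equals $2^{i_1 - 1}$ modulo $2^{i_1 - 1}$, so $m = i_1 - 1$ is a legal choice --- together with the trivial-but-essential observation that peeling off the \emph{lowest} set bit (rather than the highest) is exactly what makes that divisibility, and hence the congruence, automatic at every stage of the induction.
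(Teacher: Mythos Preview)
Your argument is correct, and it is exactly the derivation the paper has in mind: the corollary is stated there without proof, as an immediate consequence of Lemma~\lemref{Power2Identities}(iv), and your induction peeling off the lowest set bit (together with the observation that $0 \equiv 2^{k-1} \pmod{2^{k-1}}$ so the congruence hypothesis is met) is precisely how one unpacks that. There is nothing to add.
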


\begin{example} The following identity holds in any Jordan loop:
\begin{align*}
x^{317} = x^{(100111101)_2} = x(x^4(x^8(x^{16}(x^{32}(x^{256})))))
\end{align*}
\end{example}

\begin{lemma}
\lemlabel{InverseIdentities}
The following identities hold in any Jordan loop:
\begin{enumerate}
\item $x^2 x^{-1} = x$
\item $x^4 x^{-1} = x^3$ 
\item $x^8 x^{-1} = x^7$ if $x^3 x^8 = x^{11}$ 
\end{enumerate}
\end{lemma}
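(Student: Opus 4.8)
The plan is to prove all three identities by one uniform device: substitute a power $x^{2^{k-1}}$ for the variable in the Jordan identity \eqnref{Jordan}, specialize the free variable to $x^{-1}$, simplify the inner product that appears using the \emph{previous} part of the lemma together with commutativity, and then finish by cancelling on the right — legitimate since every right translation of a loop is a bijection. The three parts are proved in order, each one being used to establish the next.

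For (i), I would take $y=x^{-1}$ in \eqnref{Jordan}. Since $x^{-1}x=e$, the left side is just $x^2$, so $x^2=(x^2x^{-1})x=x\cdot x$, and cancelling $x$ on the right gives $x^2x^{-1}=x$.

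For (ii), apply \eqnref{Jordan} with $x$ replaced by $x^2$; using $(x^2)^2=x^2x^2=x^4$ from Lemma~\lemref{SmallPowers}, this reads $x^4(y\,x^2)=(x^4y)x^2$. Now put $y=x^{-1}$ and rewrite the inner product as $x^{-1}x^2=x^2x^{-1}=x$ by part (i). The left side becomes $x^4x=x^5$, which equals $x^3x^2$ by Lemma~\lemref{Power2Identities}(i), so $(x^4x^{-1})x^2=x^3x^2$; cancelling $x^2$ gives $x^4x^{-1}=x^3$. Part (iii) is the same step once more, now with $x$ replaced by $x^4$ and $(x^4)^2=x^8$ (Lemma~\lemref{Power2Identities}(ii) or (v)), reducing — via part (ii) in the form $x^{-1}x^4=x^4x^{-1}=x^3$ — to $x^8x^3=(x^8x^{-1})x^4$. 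This is where the hypothesis $x^3x^8=x^{11}$ enters: with commutativity it rewrites the left side as $x^{11}$, and $x^{11}=x^7x^4$ by Lemma~\lemref{Power2Identities}(ii), so cancelling $x^4$ yields $x^8x^{-1}=x^7$.

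I do not expect a real obstacle beyond careful bookkeeping; the only load-bearing nontrivial input is the assumption $x^3x^8=x^{11}$ in (iii), needed precisely because — unlike the exponent arithmetic used in (i) and (ii) — this instance is not available unconditionally (cf. Lemma~\lemref{Power2Identities}(iii)). The one thing to watch is that at each stage the power identity invoked, namely $(x^{2^{k-1}})^2=x^{2^k}$, $x^3x^2=x^5$, and $x^7x^4=x^{11}$, is among those established unconditionally in Lemmas~\lemref{SmallPowers} and~\lemref{Power2Identities}.
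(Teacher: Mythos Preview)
Your proof is correct and is essentially the same argument as the paper's: in each part you instantiate \eqnref{Jordan} at the element $x^{2^{k-1}}$ with $y=x^{-1}$, simplify the inner factor via the preceding part, and cancel the resulting right translation. The only cosmetic difference is that the paper writes each step with commutativity already applied (e.g.\ $x^2\cdot x^4x^{-1}=x^4\cdot x^2x^{-1}$), whereas you keep the Jordan identity in its original left--right form and invoke commutativity explicitly.
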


\begin{proof}
(i) By \eqnref{Jordan}, $x^2 = x^2 \cdot x x^{-1} = x \cdot x^2 x^{-1}$ and we may now cancel $x$ from both sides to get $x = x^2 x^{-1}$.

(ii) Recall that $x^4 = (x^2)^2$. By \eqnref{Jordan} and (i), $x^2 \cdot x^4 x^{-1} = x^4 \cdot x^2 x^{-1} = x^4 x = x^2 x^3$ and we may now cancel $x^2$ from both sides to get $x^4 x^{-1} = x^3$.

(iii) Recall that $x^8 = (x^4)^2$. By \eqnref{Jordan} and (ii), $x^4 \cdot x^8 x^{-1} = x^8 \cdot x^4 x^{-1} = x^8 x^3 = x^{11} = x^4 x^7$ and we may cancel $x^4$ from both sides to get $x^8 x^{-1} = x^7$.

\end{proof}

\begin{lemma}
\lemlabel{Inverse}
If $Q$ is a Jordan loop and $x \in Q$, then $(x^{2^n})^{-1} = (x^{-1})^{2^n}$.
\end{lemma}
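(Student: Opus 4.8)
The plan is to prove the statement by induction on $n$, after first isolating the case $n=1$, which turns out to carry all the content. For the inductive step I would invoke Lemma~\lemref{Power2Identities}(v), which gives $x^{2^n} = (x^{2^{n-1}})^2$ and, applied to the element $x^{-1}$, also $(x^{-1})^{2^n} = \bigl((x^{-1})^{2^{n-1}}\bigr)^2$. So if the $n=1$ case is known for \emph{every} element of $Q$ — in particular for $w := x^{2^{n-1}}$ — and the inductive hypothesis $(x^{2^{n-1}})^{-1} = (x^{-1})^{2^{n-1}}$ holds, then
\[
(x^{2^n})^{-1} = \bigl((x^{2^{n-1}})^2\bigr)^{-1} = \bigl((x^{2^{n-1}})^{-1}\bigr)^2 = \bigl((x^{-1})^{2^{n-1}}\bigr)^2 = (x^{-1})^{2^n},
\]
the middle equality being the $n=1$ case at $w$ and the third the inductive hypothesis. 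Hence everything reduces to showing $(w^2)^{-1} = (w^{-1})^2$ for all $w \in Q$.

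For this base case I would set $z := w^{-1}$, so that $wz = zw = e$ and therefore $z^{-1} = w$ as well. Lemma~\lemref{InverseIdentities}(i) applied to $w$ gives $w^2 z = w^2 w^{-1} = w$, and the same identity applied to $z$ gives $z^2 w = z^2 z^{-1} = z$. The key move is then to feed $y = z^2$ into the Jordan identity~\eqnref{Jordan} (with $x$ there playing the role of $w$):
\[
(w^2 z^2)\,w = w^2(z^2 w) = w^2 z = w = e\,w.
\]
Cancelling $w$ on the right — legitimate since right translations in a quasigroup are bijections — yields $w^2 z^2 = e$, i.e. $(w^2)^{-1} = z^2 = (w^{-1})^2$.

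With the base case in hand the induction assembles immediately: $n=0$ is trivial, $n=1$ is the base case with $w=x$, and $n\geq 2$ is the displayed computation. The one point requiring care is that loop inverses do not interact nicely with reassociation, so one cannot simply ``distribute'' an inverse across a square; the role of the choice $y=z^2$ in the Jordan identity is precisely to circumvent this, while Lemma~\lemref{Power2Identities}(v) then pushes the base case up the tower of $2$-power exponents. I expect the $y=z^2$ substitution to be the only real spark needed; the rest is bookkeeping, so I do not anticipate any further obstacle.
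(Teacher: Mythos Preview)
Your proof is correct and follows essentially the same strategy as the paper's: reduce to the case $n=1$ via induction using Lemma~\lemref{Power2Identities}(v), then settle $n=1$ with the Jordan identity, Lemma~\lemref{InverseIdentities}(i), and cancellation. Your base case is in fact a bit cleaner than the paper's---you apply~\eqnref{Jordan} once (with $y=z^2$ and $x=w$) to obtain $w^2z^2=e$ directly, whereas the paper applies~\eqnref{Jordan} twice before cancelling $x^{-1}$ twice---but the underlying idea is the same.
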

\begin{proof}
The identity is trivial for $n=0$. For $n=1$, we have
\begin{align*}
(x^{-1})^2
&= (x^{-1})^2 \cdot x x^{-1} \\
&= (x^{-1})^2 x \cdot x^{-1} & \eqnref{Jordan} \\
&= (x^{-1})^2 (x^2 x^{-1}) \cdot x^{-1} & \mbox{(i) of Lemma \lemref{InverseIdentities}} \\
&= ((x^{-1})^2 x^2) x^{-1} \cdot x^{-1} & \eqnref{Jordan}
\end{align*}
Cancel $x^{-1}$ from both sides twice to get $e = (x^{-1})^2 x^2$. Thus $(x^{-1})^2 = (x^2)^{-1}$. Now assuming the identity holds for $n-1$, we have
\begin{align*}
(x^{2^n})^{-1}
&= ((x^{2^{n-1}})^2)^{-1} & \mbox{(v) of Lemma \lemref{Power2Identities}} \\
&= ((x^{2^{n-1}})^{-1})^2 & \mbox{Previous Case} \\
&= ((x^{-1})^{2^{n-1}})^2 & \mbox{Induction Assumption} \\
&= (x^{-1})^{2^{n}} & \mbox{(v) of Lemma \lemref{Power2Identities}}
\end{align*}

\end{proof}

\begin{lemma}
\lemlabel{Identities2}
The following identities hold in any Jordan loop:
\begin{enumerate}
\item $(x^2)^{-1} x = x^{-1}$
\item $x^3 x^{-2} = x$ 
\item $x^3 x^{-1} = x^2$ 
\item $x^4 (x^{-1})^3 = x$ 
\item $x^6 x^{-2} = x^4$ 
\item $x^6 x^{-4} = x^2$ 
\end{enumerate}
\end{lemma}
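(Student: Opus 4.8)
The plan is to prove the six identities in the order stated, using Lemmas \lemref{SmallPowers}, \lemref{Power2Identities}, \lemref{InverseIdentities} and \lemref{Inverse}, commutativity, and the fact that in a loop left and right multiplications are bijections, so one may cancel on either side. It is convenient to record first that $x^{-2}=(x^2)^{-1}=(x^{-1})^2$ and $x^{-4}=(x^4)^{-1}=(x^{-1})^4$ by Lemma \lemref{Inverse}, that $(x^2)^2=x^4$ by Lemma \lemref{Power2Identities}(v), that $(x^2)^3=x^2x^4=x^6$ by Lemma \lemref{Power2Identities}(i) and (ii), and that $(x^{-1})^3$ is well defined by Lemma \lemref{SmallPowers}.

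Identity (i) is just Lemma \lemref{InverseIdentities}(i) applied to $x^{-1}$, which reads $(x^{-1})^2x=x^{-1}$, together with $(x^{-1})^2=(x^2)^{-1}$. For (ii) and (iii) the key observation is that one application of \eqnref{Jordan} with repeated factor $x^{-1}$, namely $(x^{-1})^2(x^3x^{-1})=((x^{-1})^2x^3)x^{-1}$, ties the value of $x^3x^{-1}$ to that of $x^3x^{-2}$, so it is enough to pin down one of them, and I would do (ii) first. Rewriting $x^3$ as $x^4x^{-1}$ via Lemma \lemref{InverseIdentities}(ii) and commuting puts $x^3x^{-2}$ in the form $(x^{-1})^2(x^4x^{-1})$, which has the shape $z^2(yz)$ with $z=x^{-1}$; then \eqnref{Jordan} rewrites it as $((x^{-1})^2x^4)x^{-1}$, and since $(x^{-1})^2x^4=x^4(x^{-1})^2=(x^2)^2(x^2)^{-1}=x^2$ by Lemma \lemref{InverseIdentities}(i) applied to $x^2$, this collapses to $x^2x^{-1}=x$. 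Identity (iii) then follows at once: by \eqnref{Jordan} and (ii), $(x^{-1})^2(x^3x^{-1})=((x^{-1})^2x^3)x^{-1}=(x^3x^{-2})x^{-1}=xx^{-1}=e$, hence $x^3x^{-1}=((x^{-1})^2)^{-1}=x^2$.

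For (iv) I would apply (ii) to $x^{-1}$, obtaining $(x^{-1})^3x^2=x^{-1}$; then Lemma \lemref{InverseIdentities}(ii) and \eqnref{Jordan} with repeated factor $x^2$ give $x^3=x^4x^{-1}=x^4((x^{-1})^3x^2)=(x^4(x^{-1})^3)x^2$, and since also $x^3=x\,x^2$, cancelling $x^2$ yields $x^4(x^{-1})^3=x$. Finally, (v) and (vi) are just (iii) and (ii) with $x$ replaced by $x^2$: using $(x^2)^3=x^6$, $(x^2)^2=x^4$ and $((x^2)^2)^{-1}=x^{-4}$, they read $x^6x^{-2}=x^4$ and $x^6x^{-4}=x^2$. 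I expect the main obstacle to be (ii): it cannot be reached by applying \eqnref{Jordan} directly to $x^3x^{-2}$, since neither factor is the square of the other, and the decisive (and not obvious) move is to trade $x^3$ for $x^4x^{-1}$, thereby manufacturing the square $(x^{-1})^2$ in precisely the position \eqnref{Jordan} demands; after that it is all routine bookkeeping with commutativity, cancellation, and the substitutions $x\mapsto x^{-1}$ and $x\mapsto x^2$ into the lemmas already at hand.
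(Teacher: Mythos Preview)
Your proof is correct and follows essentially the same route as the paper's: the same substitution $x^3=x^4x^{-1}$ drives (ii), the same application of \eqnref{Jordan} with repeated factor $x^{-1}$ reduces (iii) to (ii), (iv) hinges on applying (ii) to $x^{-1}$ together with \eqnref{Jordan} for the square $x^4=(x^2)^2$, and (v)--(vi) are the $x\mapsto x^2$ instances of (iii) and (ii). The only differences are cosmetic rearrangements of the same equalities.
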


\begin{proof}
(i) Let $y = x^{-1}$. Then $(x^2)^{-1} x = (x^{-1})^2 x = y^2 y^{-1} = y = x^{-1}$, using Lemma \lemref{Inverse} and (i) of Lemma \lemref{InverseIdentities}.

(ii) First, $x^3 x^{-2} = x^4 x^{-1} \cdot x^{-2} = x^4 x^{-1} \cdot (x^{-1})^2 = x^4 (x^{-1})^2 \cdot x^{-1}$, using (ii) of Lemma \lemref{InverseIdentities}. Let $y = x^2$ then $x^4 (x^{-1})^2 \cdot x^{-1} = y^2 y^{-1} \cdot x^{-1} = y x^{-1} = x^2 x^{-1} = x$, using (i) of Lemma \lemref{InverseIdentities} twice.

(iii) Using \eqnref{Jordan} and (ii), $x^{-2} \cdot x^{-1} x^{3} = x^{-1} \cdot x^{-2} x^{3} = x^{-1} x = e$. Thus $x^3 x^{-1} = (x^{-2})^{-1} = x^2$.

(iv) Let $y = x^{-1}$. Then $x^2 \cdot x^4 (x^{-1})^3 = x^4 \cdot x^2 (x^{-1})^3 = x^4 \cdot y^{-2} y^3 = x^4 y = x^3$, using \eqnref{Jordan}, (ii), and (i) of Lemma \lemref{InverseIdentities}. Now cancel $x^2$ from both sides to get $x^4 (x^{-1})^3 = x$.

(v) Using (i) of Lemma \lemref{Power2Identities}, Lemma \lemref{Inverse}, and (iii), $x^6 x^{-2} = (x^2)^3 (x^2)^{-1} = (x^2)^2 = x^4$.

(vi) Using (i) of Lemma \lemref{Power2Identities}, Lemma \lemref{Inverse}, and (ii), $x^6 x^{-4} = (x^2)^3 (x^2)^{-2} = x^2$.
\end{proof}

\begin{theorem}
\thmlabel{8WellDefined}
If $Q$ is a Jordan loop and $x \in Q$ such that $x^3 x^3 = x^6$, then
\begin{enumerate}
\item $x^6$ is well-defined
\item $x^7$ is well-defined
\item $x^6 x^{-1} = x^5$
\item $x^8$ is well-defined
\end{enumerate}
\end{theorem}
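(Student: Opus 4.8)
The plan is to treat the four claims in the order given, with (iii) carrying all the real content. For (i), (ii) and (iv) I would use the standard ``bracketing'' reduction: if $x^m$ is well-defined for every $m<n$, then any parenthesization of a product of $n$ copies of $x$ has the form $x^a\cdot x^b$ with $a+b=n$ and $1\le a,b<n$, so well-definedness of $x^n$ amounts to checking $x^ax^b=x^n$ for each split (using commutativity to halve the cases). For $n=6$, $x,\dots,x^5$ are well-defined by Lemma \lemref{SmallPowers}, so I only need $x\cdot x^5=x^6$ (the definition), $x^2x^4=x^6$ (Lemma \lemref{Power2Identities}(i)) and $x^3x^3=x^6$ (the hypothesis); this gives (i), and I record $x^6=(x^3)^2=(x^2)^3$. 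For $n=7$, (i) makes $x^6$ available, and the only nontrivial split is $x^3x^4=x^7$, which I would get from \eqnref{Jordan} with base $x^2$: writing $x^3=x\cdot x^2$ and using $(x^2)^2=x^4$, one has $x^4(x\cdot x^2)=(x^4x)x^2=x^5x^2=x^7$.

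The heart of the theorem is (iii), $x^6x^{-1}=x^5$, and this is where the hypothesis is used in an essential way. First I would record the identity $x^3x^{-4}=x^{-1}$, which in fact holds in \emph{every} Jordan loop: apply Lemma \lemref{Identities2}(iv) (which reads $y^4(y^{-1})^3=y$) with $y=x^{-1}$, and use $(x^{-1})^{-1}=x$ together with commutativity and Lemma \lemref{Inverse} (so that $x^{-4}$ is unambiguous). Now invoke \eqnref{Jordan} with base $x^3$: since the hypothesis gives $(x^3)^2=x^3x^3=x^6$, this reads $x^6(yx^3)=(x^6y)x^3$ for all $y$. Taking $y=x^{-4}$ yields
\[
x^6(x^{-4}x^3)=(x^6x^{-4})x^3 .
\]
The left-hand side is $x^6x^{-1}$ by the identity just noted (and commutativity), while the right-hand side is $x^2\cdot x^3=x^5$ by Lemma \lemref{Identities2}(vi) and Lemma \lemref{Power2Identities}(i); hence $x^6x^{-1}=x^5$. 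I expect this to be the main obstacle: without $(x^3)^2=x^6$, using \eqnref{Jordan} ``with base $x^3$'' only produces terms of the form $(x^3)^2(\cdots)$, and I do not see how to reach $x^6x^{-1}=x^5$ from the earlier lemmas alone — the hypothesis is precisely what turns $x^3$ into a usable Jordan base.

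Finally, for (iv): with $x^6$ and $x^7$ well-defined, the bracketing reduction leaves the splits $\{1,7\},\{2,6\},\{3,5\},\{4,4\}$, of which the first two are the definition and Lemma \lemref{Power2Identities}(i). For $\{4,4\}$, write $x^4=x^2x^2$ and use \eqnref{Jordan} with base $x^2$: $x^4x^4=x^4(x^2x^2)=(x^4x^2)x^2=x^6x^2=x^8$. For $\{3,5\}$, use \eqnref{Jordan} with base $x^3$ once more, now with $y=x^{-1}$: then $x^6(x^{-1}x^3)=(x^6x^{-1})x^3$, whose left-hand side equals $x^6x^2=x^8$ (using $x^{-1}x^3=x^2$, Lemma \lemref{Identities2}(iii), and Lemma \lemref{Power2Identities}(i)) and whose right-hand side equals $x^5x^3$ by part (iii); so $x^5x^3=x^8$, completing the well-definedness check.
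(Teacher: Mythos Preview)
Your argument is correct and follows the same route as the paper. In each part you use the Jordan identity with the same base element and the same auxiliary $y$ as the paper does (base $x^2$ for (ii) and the $\{4,4\}$ split in (iv); base $x^3$ with $y=x^{-4}$ for (iii) and $y=x^{-1}$ for the $\{3,5\}$ split in (iv)), and you invoke the same earlier lemmas (\lemref{Identities2}(iii),(iv),(vi), \lemref{Inverse}, \lemref{Power2Identities}(i)). The only difference is presentational: you make the ``bracketing reduction'' and the derivation of $x^3x^{-4}=x^{-1}$ from Lemma \lemref{Identities2}(iv) explicit, whereas the paper leaves these implicit.
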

\begin{proof}
(i) $x^3 x^3 = x^6 = x x^5 = x \cdot x^2 x^3 = x^2 \cdot x x^3 = x^2 x^4$.

(ii) $x^6 x = x^2 x^4 \cdot x = x^2 x^5 = x^4 x \cdot x^2 = x^4 x^3$.

(iii) $x^6 x^{-1} = (x^3)^2 \cdot x^3 x^{-4} = x^3 \cdot (x^3)^2 x^{-4} = x^3 \cdot x^6 x^{-4} = x^3 x^2 = x^5$.

(iv) $x^8 = x^6 x^2 = (x^3)^2 \cdot x^3 x^{-1} = x^3 \cdot (x^3)^2 x^{-1} = x^3 x^5$.

\end{proof}
Theorem \thmref{NonWellDefinedPowers} shows that Theorem \thmref{8WellDefined} cannot be improved.

\begin{theorem}
\thmlabel{NonWellDefinedPowers}
If $n > 5$ is neither a power of two nor prime, then there is a Jordan loop $Q$ and a generating element $x \in Q$ such that $x^k$ is well-defined for $0 \leq k < n$ but $x^n$ is not well-defined.
\end{theorem}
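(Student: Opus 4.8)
The plan is to prove this by an explicit construction: for each admissible $n$ we exhibit a finite commutative Jordan loop $Q$ with a generator $x$ for which all parenthesizations of $x^k$ agree when $k<n$ while two disagree when $k=n$.

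First extract the arithmetic input. Since $n$ is not a power of two it has an odd prime divisor $p$; since $n$ is composite, $p<n$, and we set $q=n/p\ge 2$. The relation to be broken is $L_{x^p}^{\,q}(e)=x^n$: granting well-definedness of all powers below $n$ one has $L_{x^p}^{\,q}(e)=x^p\cdot x^{\,n-p}$, so this amounts to making the single product $x^p\cdot x^{\,n-p}$ differ from the right-associated $x^n$. This is the general-$n$ analogue of the relation $x^3x^3=x^6$ (the case $n=6$, $p=3$) whose failure underlies the non-well-definedness of $x^6$; it is not pinned down by Lemma~\lemref{Power2Identities} because $p$ is not a power of two. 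I would also list, via Lemmas~\lemref{SmallPowers} and \lemref{Power2Identities}, Theorem~\thmref{8WellDefined} and the Corollary, which lower relations ($x^ax^{2^j}=x^{a+2^j}$, and $x^3x^3=x^6$ once $n>6$, together with their consequences) must be preserved so that $x^k$ stays well-defined for all $k<n$.

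The construction is the heart of the matter. I would realize $Q$ as a central extension of the cyclic group $\mathbb{Z}_n$ by a small abelian group $Z$: as a set $Q=\mathbb{Z}_n\times Z$, with product $(i,s)(j,t)=(i+j,\,s+t+f(i,j))$ for some symmetric $f\colon\mathbb{Z}_n\times\mathbb{Z}_n\to Z$ with $f(0,\cdot)=0$ --- equivalently, the Cayley table of $\mathbb{Z}_n$ perturbed on its high products; other explicit tables, perhaps with a small case split on the shape of $n$, would serve as well. Such a $Q$ is automatically a loop with identity $(0,0)$, and it satisfies \eqnref{Jordan} exactly when $f$ obeys the linear identity $f(b,a)+f(2a,a+b)=f(2a,b)+f(2a+b,a)$ for all $a,b$. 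With $x=(1,0)$, each parenthesization of the degree-$k$ power of $x$ evaluates to $(k\bmod n,\ \sigma_T)$, where $\sigma_T\in Z$ sums the values of $f$ at the ``block sizes'' of the parenthesization tree $T$; so well-definedness at degree $k$ says $\sigma_T$ is independent of $T$. One must choose $f$ so that: (a) the Jordan identity holds; (b) $\sigma_T$ is $T$-independent at every degree $k<n$; and (c) it is \emph{not} $T$-independent at degree $n$, i.e.\ $\sigma$ for the tree of $x^p\cdot x^{n-p}$ differs from $\sigma$ for the right-associated $x^n$. Conditions (a) and (b) cut out a linear subspace of factor sets, and the crux is to show (c) is not forced by them --- which I would do by exhibiting an explicit $f$ (depending only on $p$ and $q$, with $Z$ as small as $\mathbb{Z}_2$ or $\mathbb{Z}_p$) supported on the residue-pairs that ``first occur'' at degree $\ge n$, rather than by an abstract dimension count.

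Granting such an $f$, the rest is routine bookkeeping. The loop axioms and \eqnref{Jordan} hold by construction. The element $x=(1,0)$ generates $Q$: its powers sweep out $\mathbb{Z}_n\times\{0\}$, and since $x^n$ is not well-defined the associator $\bigl(x^p\cdot x^{n-p}\bigr)\cdot(x^n)^{-1}$ is a nontrivial element of $Z$, hence (for $Z$ cyclic) a generator. Well-definedness of $x^k$ for $k<n$ is exactly (b), which with Lemmas~\lemref{SmallPowers} and \lemref{Power2Identities} and the Corollary lets one rewrite any parenthesization of $x^k$ as the canonical binary one. Finally $x^n$ is not well-defined, since its right-associated form and the legitimate parenthesization $L_{x^p}^{\,q}(e)=x^p\cdot x^{n-p}$ (legitimate since $p,\,n-p<n$ and $p+(n-p)=n$) evaluate to distinct elements by (c). The main obstacle is producing $f$ --- or the perturbed table --- meeting (a)--(c) for every admissible $n$; a secondary, delicate-but-routine point is ensuring the perturbation does not accidentally break a low-degree relation that well-definedness below $n$ relies on.
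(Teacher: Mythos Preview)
The paper does not prove this theorem at all: its entire proof is the sentence ``See Theorem~5.5 of \cite{Admissible_Orders}.'' So there is no argument here for you to match; any comparison would have to be with the cited paper, not this one.

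Your sketch is more substantive than what appears in the paper and heads in a sensible direction. The central-extension framework $Q=\mathbb{Z}_n\times Z$ with $(i,s)(j,t)=(i+j,\,s+t+f(i,j))$ is the natural way to manufacture a commutative loop in which one targeted high-degree power relation fails while the lower ones survive; your translation of \eqnref{Jordan} into the cocycle condition $f(a,b)+f(2a,a+b)=f(2a,b)+f(2a+b,a)$ and of well-definedness of $x^k$ into constancy of the tree-sums $\sigma_T$ is correct, as is the argument that $x=(1,0)$ generates $Q$ once (c) holds and $Z$ is cyclic of prime order.

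But the proof has exactly the gap you yourself flag: having reduced the problem to finding a symmetric $f$ satisfying (a), (b), and (c), you do not produce one. The phrase ``supported on the residue-pairs that first occur at degree $\ge n$'' is a heuristic, not a construction, and it is not evident that such an $f$ is compatible with (a): the Jordan cocycle identity involves the arguments $2a$ and $2a+b$ taken modulo $n$, which can link a pair $(i,j)$ with $i+j\ge n$ to pairs with smaller sum and thereby force nonzero values of $f$ down below degree $n$, potentially spoiling (b). Nor is it clear that a single recipe with $Z=\mathbb{Z}_2$ or $\mathbb{Z}_p$ works uniformly in $n$ without a case split. Until an explicit $f$, or an explicit multiplication table, is written down and verified for every admissible $n$, the argument remains a plan rather than a proof --- just as the present paper's ``proof'' is only a pointer to \cite{Admissible_Orders}.
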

  
\begin{proof}
See Theorem 5.5 of \cite{Admissible_Orders}.
\end{proof}

\section{Jordan Loops of Order 9}

The following is a well-known and simple result. We reproduce it here for completeness.

\begin{lemma}
\lemlabel{EvenOrder}
A commutative loop $Q$ of order $n$ has a nontrivial involution if and only if $n$ is even.
\end{lemma}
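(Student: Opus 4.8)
The plan is to work directly with the Cayley table of $Q$, viewed as a symmetric Latin square $L$ of order $n$: it is symmetric because $Q$ is commutative, and Latin because $Q$ is a quasigroup. The point is that a nontrivial involution is exactly an element $x \neq e$ with $x \cdot x = e$, that is, a diagonal cell $(x,x) \neq (e,e)$ carrying the symbol $e$. So the whole question reduces to understanding which symbols occur on the main diagonal of $L$, and with what multiplicity, and the entire argument is then a parity count.

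The key step I would prove is the classical parity fact: for each symbol $c$, if $d(c)$ is the number of indices $i$ with $L_{ii} = c$, then $d(c) \equiv n \pmod 2$. Indeed $c$ occurs exactly $n$ times in $L$ (once in each row), and by symmetry its off-diagonal occurrences split into transpose pairs $\{(i,j),(j,i)\}$ with $i\neq j$; hence the number of its diagonal occurrences has the parity of $n$. From here I would split into two cases. If $n$ is even, apply this to $c = e$: since $e\cdot e = e$ we have $d(e)\geq 1$, and $d(e)$ is even, so $d(e)\geq 2$; any second index $x\neq e$ with $L_{xx}=e$ is a nontrivial involution. If $n$ is odd, then $d(c)$ is odd, hence $d(c)\geq 1$, for \emph{every} symbol $c$; since $\sum_c d(c) = n$ (the number of diagonal cells) and there are $n$ symbols, this forces $d(c)=1$ for all $c$, so $x\mapsto x^2$ is a bijection of $Q$. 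In particular $e$ is the only solution of $x\cdot x = e$, so $Q$ has no nontrivial involution. Combining the two cases gives the biconditional.

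The one thing to be careful about — and what I expect to be the only real subtlety — is that one should resist trying to run the whole proof via the ``pair $x$ with $x^{-1}$'' involution of $Q$. That pairing does show $\#\{x : x^2 = e\}\equiv n\pmod 2$, which instantly gives the even case (and the contrapositive ``no nontrivial involution $\Rightarrow n$ odd''), but it does not by itself exclude $\#\{x:x^2=e\}$ being $3,5,\dots$ when $n$ is odd. For the implication ``$n$ odd $\Rightarrow$ no nontrivial involution'' one genuinely needs the count summed over \emph{all} symbols, i.e.\ the fact that squaring is bijective in a commutative loop of odd order. Beyond that, the only facts used are that a loop has unique two-sided inverses and that $e\cdot e = e$, so the rest is routine.
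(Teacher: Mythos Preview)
Your argument is correct and is essentially the same as the paper's: both count, for each symbol, its occurrences on the main diagonal of the (symmetric) Cayley table, observe that this count has the parity of $n$ because off-diagonal occurrences pair up, and then read off the two directions from $d(e)\ge 1$ (even case) and $\sum_c d(c)=n$ forcing $d(c)=1$ (odd case). Your extra cautionary paragraph about the $x\leftrightarrow x^{-1}$ pairing is a helpful aside but not a different method.
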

\begin{proof}
Fix a multiplication table for $Q$. Note that every element of $Q$ appears in the multiplication table $n$ times. Since $Q$ is commutative, every element appears the same number of times above the main diagonal as it does below. Thus every element appears an even number of times off the main diagonal. If $n$ is even, then every element must appear an even number of times on the main diagonal while if $n$ is odd, every element must appear an odd number of times on the main diagonal.

Thus, if $n$ is odd, then every element must appear exactly once on the main diagonal. In particular, since $e$ must appear in the cell corresponding to $e \cdot e$, it cannot appear anywhere else. If $n$ is even, since $e$ must appear in the $e \cdot e$ cell, it must also appear somewhere else along the main diagonal.
\end{proof}

\begin{corollary}
\corlabel{EvenOrderedSubLoops}
A commutative loop $Q$ of order $n$ has an even-ordered subloop if and only if $n$ is even.
\end{corollary}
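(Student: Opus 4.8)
The plan is to reduce both implications to Lemma~\lemref{EvenOrder} via the elementary observation that a nontrivial involution transfers freely between a subloop and the ambient loop. Recall that a subloop $S \leq Q$ contains $e$ and is a commutative loop in its own right under the restricted operation, and that for $a \in S$ the square $a^2$ is computed the same way whether we view $a$ as an element of $S$ or of $Q$.

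For the forward direction, assume $n$ is even. Then $Q$ is itself a subloop of $Q$ of even order $n$, so there is nothing to prove; if one prefers a proper witness when $n > 2$, apply Lemma~\lemref{EvenOrder} to $Q$ to get a nontrivial involution $a$ (so $a \neq e$ and $a^2 = e$) and verify that $\{e,a\}$ is closed under multiplication and under both division operations (this uses only commutativity and $a^2 = e$), hence is a subloop of order $2$, which is even.

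For the converse, suppose $S \leq Q$ has even order. Applying Lemma~\lemref{EvenOrder} to $S$ produces an element $a \in S$ with $a \neq e$ and $a^2 = e$. Since $a \in Q$ and squaring agrees in $S$ and in $Q$, the element $a$ is a nontrivial involution of $Q$, so Lemma~\lemref{EvenOrder} applied to $Q$ forces $n$ to be even.

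I do not expect any real obstacle: the statement is essentially a corollary in the literal sense. The only point needing a moment's care is to confirm that a subloop genuinely inherits the commutative loop structure with the \emph{same} neutral element $e$, so that Lemma~\lemref{EvenOrder} applies to $S$ verbatim and involutions pass between $S$ and $Q$ without modification; once that is noted, the argument is immediate in both directions.
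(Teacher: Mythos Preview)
Your argument is correct and is exactly the approach implicit in the paper, which states the result as an unproved corollary of Lemma~\lemref{EvenOrder}: pass a nontrivial involution from an even-ordered subloop up to $Q$ for one direction, and take $Q$ itself (or the subloop $\{e,a\}$ generated by an involution) for the other. There is nothing to add.
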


\begin{corollary}
\corlabel{SquareRoot}
A commutative loop $Q$ of order $n$ has a well-defined square root operation if and only if $n$ is odd.
\end{corollary}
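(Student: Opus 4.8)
The plan is to deduce this immediately from the counting argument already used in the proof of Lemma~\lemref{EvenOrder}. First I would pin down what ``well-defined square root operation'' means: the squaring map $s\colon Q\to Q$ given by $s(x)=x\cdot x$ should be a bijection, so that for each $a\in Q$ there is a unique $x$ with $x^2=a$, which we then call $\sqrt{a}$. Because $Q$ is finite, $s$ is a bijection if and only if it is injective, if and only if it is surjective. The key observation is that for every $a\in Q$, the number $|s^{-1}(a)|$ is exactly the number of times $a$ occurs on the main diagonal of a fixed multiplication table for $Q$, since the diagonal cells are precisely the products $x\cdot x = x^2$.

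Next I would invoke the diagonal count from Lemma~\lemref{EvenOrder} directly. If $n$ is odd, that proof shows every element of $Q$ occurs exactly once on the main diagonal, so $|s^{-1}(a)|=1$ for all $a$, hence $s$ is a bijection and the square root operation is well-defined. If $n$ is even, the same proof shows every element occurs an even number of times (possibly zero) on the main diagonal; since the $n$ diagonal cells are distributed among $n$ elements with each multiplicity even, no multiplicity can equal $1$, so in fact some element occurs zero times. Thus $s$ is not surjective (equivalently, not injective), and the square root operation fails to be well-defined.

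I do not expect any genuine obstacle here: the entire content is already in Lemma~\lemref{EvenOrder}, and the corollary is essentially a reinterpretation of the phrase ``every element appears exactly once/an even number of times on the main diagonal.'' The only step needing a careful sentence is the translation between diagonal multiplicities and the fibres of the squaring map, together with the elementary fact that a self-map of a finite set is injective precisely when it is surjective.
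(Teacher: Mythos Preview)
Your proposal is correct and matches the paper's intent: the paper states this corollary without proof, treating it as immediate from the diagonal-count argument in Lemma~\lemref{EvenOrder}, and your write-up simply makes that deduction explicit. The translation you give between diagonal multiplicities and fibres of the squaring map, together with the pigeonhole observation for even $n$, is exactly the content the paper leaves to the reader.
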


\begin{lemma}
\lemlabel{MaxSubSquareSize}
If $H$ is a proper subquasigroup of a finite quasigroup $Q$, then $|H| \leq \lfloor \frac{|Q|}{2}\rfloor$.
\end{lemma}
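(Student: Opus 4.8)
The plan is the standard coset-style argument. Since $H$ is proper, I would fix an element $a \in Q \setminus H$ and consider the subset $aH := \setof{ah}{h \in H}$. Because left translation $L_a$ is a bijection of $Q$ (this is part of the quasigroup axioms), its restriction to $H$ is injective, so $|aH| = |H|$. The goal is then to show that $aH$ and $H$ are disjoint subsets of $Q$; together with $|aH| = |H|$ this gives $2|H| = |aH| + |H| \leq |Q|$, and since $|H|$ is an integer the bound $|H| \leq \lfloor |Q|/2 \rfloor$ follows immediately.

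For disjointness, suppose toward a contradiction that $ah_1 = h_2$ for some $h_1, h_2 \in H$. The equation $x h_1 = h_2$ has a unique solution in $Q$, namely $x = a$. But $H$ is a subquasigroup, so this equation also has a solution lying in $H$; by uniqueness in $Q$ that solution must be $a$, whence $a \in H$, contradicting the choice of $a$. Hence $aH \cap H = \emptyset$, as desired.

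The argument is short and the only real point requiring care — the step I would flag as the ``main obstacle,'' though it is mild — is justifying $a \in H$ from $ah_1 = h_2$: one must invoke closure of $H$ under the relevant division operation (equivalently, that a subquasigroup of $Q$ is closed under the quasigroup operations inherited from $Q$), rather than merely under multiplication. Everything else is bookkeeping: the bijectivity of $L_a$ and the passage from $2|H| \leq |Q|$ to the floor bound.
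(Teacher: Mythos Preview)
Your argument is correct and is essentially the paper's own proof, phrased in coset language rather than in terms of a multiplication table: the paper observes that the $H\times H$ block of the Cayley table already exhausts the elements of $H$ in each $H$-column, so the row of any $q_1\in Q\setminus H$ restricted to the $H$-columns lands in $Q\setminus H$, which is exactly your disjointness claim $q_1H\cap H=\emptyset$. The remaining counting step is identical.
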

\begin{proof}
Let $k = |H|$ and $n+k = |Q|$. Fix a multiplication table of $Q$ with both the rows and columns indexed first by elements of $H = \{h_i\}$ and then of $Q \setminus H = \{q_i\}$. Since $H$ is a subquasigroup of $Q$, the cells corresponding to $H \times H$ contain only elements of $H$. Then the $k$ cells corresponding to $q_1 \times H$ must be filled entirely with elements from $Q \setminus H$ and thus $Q \setminus H = n \geq k$. That is, $n+k = |Q| \geq 2|H|$ and thus $|H| \leq \lfloor \frac{|Q|}{2}\rfloor$.
\end{proof}

\begin{lemma}
\lemlabel{Monogenic} Let $Q$ be a loop of order $n$ and let $x\in Q$. If $x^m$
is well-defined for every $1\le m\le n-1$ then $\sblp{x}$ is a cyclic group of
order $k$, and $k=n$ whenever $k>\lfloor n/2\rfloor$.
\end{lemma}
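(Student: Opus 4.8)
The plan is to show that the right-associated powers $e=x^0,x^1,x^2,\dots$ are purely periodic, with least period $k\le n$, and that the power set $P=\{e,x,\dots,x^{k-1}\}$ is then either a ``small'' subloop or all of $Q$. Two preliminary facts drive everything. First, since $Q$ is finite the sequence of powers is eventually periodic, and left-cancelling $x$ (legitimate, as $L_x$ is a bijection) from the first repetition shows it is \emph{purely} periodic: if $k$ is least with $x^k$ equal to an earlier power $x^i$, then from $x^k=x\cdot x^{k-1}$ and $x^i=x\cdot x^{i-1}$ one gets $x^{k-1}=x^{i-1}$, contradicting minimality unless $i=0$; so $x^k=e$, the elements $e,x,\dots,x^{k-1}$ are distinct, $x^m=x^{m\bmod k}$ for all $m$, and $k\le n$. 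Second, well-definedness of $x^m$ for $m\le n-1$ yields $x^ix^j=x^{i+j}$ whenever $i+j\le n-1$, because $x^ix^j$ is then just one parenthesization of $i+j$ copies of $x$. Combining these: for each $i\le n-k$ the translation $L_{x^i}$ sends every $x^j$ with $0\le j\le k-1$ to $x^{i+j}=x^{(i+j)\bmod k}\in P$, and since $L_{x^i}$ is a bijection of the finite set $Q$ it must carry $P$ onto $P$ and $Q\setminus P$ onto $Q\setminus P$.

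Next I would prove the dichotomy $k\le\lfloor n/2\rfloor$ or $k=n$, which is exactly the assertion that $k>\lfloor n/2\rfloor$ forces $k=n$. Suppose instead $\lfloor n/2\rfloor<k\le n-1$. Then $n-k\le k-1$, so $x^0,x^1,\dots,x^{n-k}$ are $n-k+1$ distinct elements of $P$, while $Q\setminus P$ is nonempty; fix $z\in Q\setminus P$. By the closing observation of the previous paragraph, $x^i\cdot z\in Q\setminus P$ for each $i=0,1,\dots,n-k$; these $n-k+1$ elements lie in distinct rows of the column of $z$ in the multiplication table, hence are pairwise distinct, yet they all belong to the $(n-k)$-element set $Q\setminus P$ --- a contradiction. (This is a variant, for an $L$-invariant rather than fully closed set, of the counting behind Lemma~\lemref{MaxSubSquareSize}.)

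If $k\le\lfloor n/2\rfloor$, then every product $x^ax^b$ with $0\le a,b\le k-1$ has $a+b\le 2k-2\le n-2$, so $x^ax^b=x^{a+b}=x^{(a+b)\bmod k}\in P$. Thus $P$ is closed under the operation; a finite subset of a loop closed under multiplication is a subquasigroup, and since $e\in P$ it is a subloop whose multiplication table is literally that of $\mathbb{Z}/k\mathbb{Z}$. Hence $P$ is a cyclic group of order $k$, and as any subloop containing $x$ contains every power of $x$, we get $\langle x\rangle=P$.

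This leaves the case $k=n$, i.e.\ $Q=\{e,x,\dots,x^{n-1}\}=\langle x\rangle$, where the remaining task is to show the multiplication is forced to be $x^ax^b=x^{(a+b)\bmod n}$, so that $Q$ is the cyclic group of order $n$. It holds for $a+b\le n-1$, and bijectivity of $L_x$ and $R_x$ forces $x\cdot x^{n-1}=x^{n-1}\cdot x=e$, so rows $0,1$ and columns $0,1$ of the table agree with the group table. I would then induct on $a$: assuming rows $0,\dots,a-1$ coincide with the group rows, fill the undetermined cells of row $a$ (those in columns $n-a,\dots,n-1$) from right to left; upon reaching column $b$, the entries already placed in row $a$ force the exponent there into $\{0,1,\dots,a+b-n\}$, while the known entries of column $b$ in rows $0,\dots,a-1$ force it into $\{a+b-n,a+b-n+1,\dots,b-1\}$, and these two sets meet only in $a+b-n$, so the cell is determined; hence row $a$ is again a group row. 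The main obstacle is precisely this last step: it is a pure multiplication-table (Latin-square) completion, using nothing beyond the quasigroup property and the already-established entries, but one must resolve the cells in the right order (rows increasing, columns within a row decreasing), since the bare row-and-column counts do not individually pin down each entry.
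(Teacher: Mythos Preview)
The paper does not actually prove this lemma; it merely cites Lemma~5.3 of \cite{Admissible_Orders}. So there is no in-paper argument to compare against, and your write-up supplies exactly what the paper outsources.

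Your proof is correct and self-contained. The pure-periodicity argument via left cancellation is standard and sound; the identity $x^ix^j=x^{i+j}$ for $i+j\le n-1$ is an immediate consequence of well-definedness; and your pigeonhole contradiction for $\lfloor n/2\rfloor<k<n$ is a clean variant of the idea behind Lemma~\lemref{MaxSubSquareSize}, requiring only that $L_{x^i}$ preserve $P$ for $i\le n-k$ rather than full closure. In the case $k\le\lfloor n/2\rfloor$ your observation $a+b\le 2k-2\le n-2$ gives closure of $P$ and the cyclic-group table directly. The only place meriting care is the $k=n$ case, and your row-by-row, right-to-left Latin-square completion works: at cell $(a,b)$ the already-filled row entries (columns $0,\dots,n-1-a$ together with columns $b+1,\dots,n-1$) leave exactly the exponents $\{0,\dots,a+b-n\}$ available, while the inductively known column entries in rows $0,\dots,a-1$ leave exactly $\{a+b-n,\dots,b-1\}$, and these intersect in the single exponent $a+b-n$. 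The ordering of the induction is indeed essential, and you have it right.
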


\begin{proof}
See Lemma 5.3 in \cite{Admissible_Orders}.
\end{proof}

\begin{lemma}
\lemlabel{Order9Cases}
If $Q$ is a Jordan loop of order 9, then $Q$ is either of exponent 3 or cyclic.
\end{lemma}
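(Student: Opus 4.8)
The plan is to determine the order of the subloop $\sblp{x}$ generated by each $x\in Q$ and then to handle the resulting ``monogenic'' case by pinning down the order of the generator and feeding it into the power identities above. Since $9$ is odd, Corollary \corref{EvenOrderedSubLoops} shows $Q$ has no even-ordered subloop and Corollary \corref{SquareRoot} shows that squaring is a bijection of $Q$ (so $Q$ has no nontrivial involution); hence every subloop of $Q$ has odd order, and by Lemma \lemref{MaxSubSquareSize} a proper subloop has order at most $\lfloor 9/2\rfloor=4$. Thus a proper subloop has order $1$ or $3$, and since the only loop of order $3$ is the cyclic group $\mathbb{Z}_3$ (in which every element cubes to $e$), the subloop $\sblp{x}$ has order $1$, $3$, or $9$ for each $x$, with $x^3=e$ in the first two cases. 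Therefore either $x^3=e$ for every $x\in Q$, so that $Q$ has exponent $3$, or some $x$ satisfies $x^3\ne e$, in which case $\sblp{x}$ has order $9$, i.e.\ $x$ generates $Q$; it remains to prove that in this case $Q\cong\mathbb{Z}_9$.

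So assume $Q=\sblp{x}$ with $x^3\ne e$. The right-associated powers $e=x^0,x^1,x^2,\dots$ form the forward orbit of $e$ under the bijection $L_x$ and are therefore purely periodic: there is a least $p\ge 1$ with $x^p=e$, and $e,x,\dots,x^{p-1}$ are then $p$ distinct elements of $Q$, so $p\le 9$. I would eliminate every value $p<9$. We have $p\ne 1,3$ since $x\ne e$ and $x^3\ne e$; $p\ne 2,4,8$, since then one of $x,x^2,x^4$ would be a nontrivial involution (using $x^{2^{k+1}}=(x^{2^{k}})^2$, part (v) of Lemma \lemref{Power2Identities}), which is impossible; $p\ne 6$, since then $(x^4)^2=x^8$ by Lemma \lemref{Power2Identities} while $x^8=x^2$ because $x^6=e$, so $(x^4)^2=(x^1)^2$ and injectivity of squaring forces $x^4=x$, i.e.\ $x^3=e$; and $p\ne 5,7$, since in those two cases writing $x^3$ as the inverse of an appropriate power of $2$ and applying parts of Lemma \lemref{Power2Identities} together with Lemma \lemref{Inverse} gives $x^3x^3=x^6$, whereupon Theorem \thmref{8WellDefined}, Lemma \lemref{SmallPowers}, and Lemma \lemref{Monogenic} would make $\sblp{x}=Q$ a cyclic group of order $p\in\{5,7\}$, contradicting $|Q|=9$. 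Hence $p=9$, and $Q=\{e,x,\dots,x^8\}$ with $x^9=e$.

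With $p=9$ in hand, everything comes down to the single identity $x^3x^3=x^6$ (equivalently $(x^3)^2=(x^2)^3$): granting it, Theorem \thmref{8WellDefined} makes $x^6,x^7,x^8$ well-defined, so by Lemma \lemref{SmallPowers} all of $x^1,\dots,x^8$ are well-defined, and Lemma \lemref{Monogenic} then yields $Q=\sblp{x}\cong\mathbb{Z}_9$. To obtain the identity I would look at the subloop $\sblp{x^3}$, which again has order $3$ or $9$. If it has order $3$, then $(x^3)^3=e$, so $x^3x^3=(x^3)^{-1}$, and one is reduced to identifying $(x^3)^{-1}$ with $x^6$ (i.e.\ $x^3x^6=e$) using the inverse identities of Lemmas \lemref{InverseIdentities} and \lemref{Identities2}, the relation $x^9=e$, and \eqnref{Jordan}. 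If instead $\sblp{x^3}=Q$, so that $x^3$ is again a generator with $(x^3)^3\ne e$, this configuration must be excluded. Ruling out such a period-$9$ generator whose cube again generates is the part I expect to be the main obstacle; I would approach it either by a descent along the tower $x,x^3,(x^3)^3,\dots$, no term of which is permitted to generate a non-cyclic loop, or by counting the cycle structure of the squaring bijection on $Q\setminus\{e\}$, which is tightly constrained since any cycle of length at least $3$ must lie inside a subloop of order $9$.
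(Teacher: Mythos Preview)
Your first paragraph already proves the lemma as stated, and it is essentially identical to the paper's argument: a proper subloop has order at most $\lfloor 9/2\rfloor=4$ by Lemma~\lemref{MaxSubSquareSize} and must be odd by Corollary~\corref{EvenOrderedSubLoops}, so $|\sblp{x}|\in\{1,3,9\}$; hence either every $x$ satisfies $x^3=e$ or some $x$ generates $Q$.

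The remainder of your proposal stems from a misreading of the word ``cyclic.'' In this paper ``cyclic'' means \emph{monogenic} (generated by a single element), not ``isomorphic to $\mathbb{Z}_9$''; this is clear from the very next lemma, Lemma~\lemref{Order9Monogenic}, whose content is precisely that a cyclic Jordan loop of order $9$ is a group. So once you have ``either exponent $3$ or $Q=\sblp{x}$,'' you are done with Lemma~\lemref{Order9Cases}. Your second and third paragraphs are in fact attacking Lemmas~\lemref{Order9MonogenicPowers} and~\lemref{Order9Monogenic}. Your period analysis in the second paragraph is close to the paper's proof of Lemma~\lemref{Order9MonogenicPowers} (with a small misstatement: for $p\in\{5,7\}$ Lemma~\lemref{Monogenic} gives $\sblp{x}$ cyclic of order $9$, which contradicts $x^p=e$, rather than directly giving order $p$). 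For the final step---showing $x^3x^3=x^6$ when $p=9$---the paper does not argue via $\sblp{x^3}$ or descent as you propose, but instead fills in a partial multiplication table and rules out $x^3x^3=x$ by a short Jordan-identity computation; your suggested approaches are not obviously doomed, but the table argument is much shorter.
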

\begin{proof}
Suppose $e \neq x \in Q$ does not generate $Q$ and let $k = |\sblp{x}|$. Lemma \lemref{MaxSubSquareSize} shows that $k \leq \lfloor 9 \rfloor = 4$ and Corollary \corref{EvenOrderedSubLoops} shows that $k = 3$.
\end{proof}

\begin{lemma}
\lemlabel{Order9MonogenicPowers}
If $Q = \sblp{x}$ is a Jordan loop of order 9, then $Q = \{x^k : 1 \leq k \leq 9\}$ and $x^n = x^{(n \mod 9)}$ for all $n \geq 0$.
\end{lemma}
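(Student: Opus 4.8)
The plan is to reduce the whole statement to the single associativity fact $x^3 x^3 = x^6$, where $x^6$ denotes the right-associated sixth power, and then to read off everything else from the results already established. Once $x^3 x^3 = x^6$ is known, Lemma~\lemref{SmallPowers} gives that $x^3,x^4,x^5$ are well-defined and Theorem~\thmref{8WellDefined} --- whose hypothesis is precisely $x^3 x^3 = x^6$ --- gives that $x^6,x^7,x^8$ are well-defined; together with the trivial cases $x^1,x^2$ this means $x^m$ is well-defined for all $0 \le m \le 8$. Lemma~\lemref{Monogenic} then applies with $n=9$ and shows that $\sblp x$ is a cyclic group, which has order $9$ because $\sblp x = Q$. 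Hence $x$ has order $9$, so $x^9 = e$, the nine elements $x,x^2,\dots,x^8,x^9=e$ are distinct and exhaust $Q$, and $x^n = x^{n \bmod 9}$ for every $n \ge 0$.

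So the entire burden is to prove $x^3 x^3 = x^6$ in a Jordan loop $Q$ of order $9$ generated by $x$. First I would clear away degenerate cases: $x \ne e$; $x^2 \ne e$, for otherwise $x$ is a nontrivial involution and Lemma~\lemref{EvenOrder} contradicts $|Q|=9$; and $x^3 \ne e$, for otherwise Lemma~\lemref{SmallPowers} yields $x^4 = x^3 x = x$, whence $\{e,x,x^2\}$ is a subloop and $|\sblp x| \le 3$. Since $9$ is odd, Corollary~\corref{SquareRoot} says squaring is a bijection of $Q$; I would use this, together with the identities of Lemma~\lemref{Power2Identities}, to show that $e,x,x^2,x^3,x^4,x^5$ are pairwise distinct, treating separately the possibility $x^5=e$ (in which case the right-associated powers become periodic with period $5$).

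The core of the argument is a case analysis powered by Lemma~\lemref{MaxSubSquareSize} and Corollary~\corref{EvenOrderedSubLoops}, which together force every subloop of $Q$ to have order $1$, $3$, or $9$; in particular each nontrivial subloop $\sblp{x^2}$, $\sblp{x^3}$ has order $3$ or $9$. A subloop of order $3$ is isomorphic to $\mathbb{Z}_3$ (every loop of order at most $4$ is a group), which fixes its three elements and supplies relations such as $(x^3)^3 = e$ and $x^6 = (x^2)^3$; a subloop equal to $Q$ means $Q$ is generated by a square or a cube, so the power identities of Lemma~\lemref{Power2Identities} constrain it tightly. In each configuration I would combine these relations with the inverse identities of Lemmas~\lemref{InverseIdentities} and~\lemref{Identities2} to deduce $x^3 x^3 = x^6$ --- equivalently, that the product $x^3 x^3$ lands inside $\{e,x,x^2,x^3,x^4,x^5\}$.

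The step I expect to be the real obstacle is exactly this last point: excluding the scenario in which $x^3 x^3$, or one of its translates $x\cdot(x^3 x^3)$, $x^3\cdot(x^3 x^3)$, and so on, is a genuinely new element, which would push $|Q|$ above $9$. The leverage for this comes from the size bound of Lemma~\lemref{MaxSubSquareSize}, the bijectivity of squaring from Corollary~\corref{SquareRoot}, and meticulous bookkeeping with the power and inverse identities already established; I do not anticipate needing any new tool, but organizing the cases so that none is overlooked will be the delicate part.
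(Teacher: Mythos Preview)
Your plan overshoots the target. The lemma only asks that the \emph{right-associated} powers $x,x^2,\dots,x^9$ list $Q$ and are periodic with period $9$; it does \emph{not} assert that powers are well-defined, and in particular it does not require $x^3x^3=x^6$. In the paper that identity is the content of the \emph{next} lemma (Lemma~\lemref{Order9Monogenic}), and it is proved there \emph{using} the present lemma to label the elements of $Q$ as $x^0,\dots,x^8$ and then read off a partial multiplication table. By making $x^3x^3=x^6$ your first goal you are trying to prove both lemmas at once, without the bookkeeping advantage that the present lemma provides.

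There is also a genuine gap in your sketch. The claim ``$x^3x^3=x^6$ --- equivalently, that $x^3x^3$ lands inside $\{e,x,x^2,x^3,x^4,x^5\}$'' is not an equivalence: knowing only that $x^3x^3$ equals one of the six small powers does not identify it with the right-associated $x^6$, and since $|Q|=9$ the element $x^3x^3$ is trivially one of the nine elements anyway, so a counting argument of the sort you describe (``a genuinely new element would push $|Q|$ above $9$'') cannot pin it down. Your case split on $|\sblp{x^2}|$ and $|\sblp{x^3}|$ is plausible in spirit but you have not indicated how, say, $|\sblp{x^3}|=3$ (i.e.\ $x^3\cdot(x^3x^3)=e$) forces $x^3x^3$ to coincide with the right-associated $x^6$.

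The paper's route is more direct: suppose two right-associated powers among $x^1,\dots,x^9$ coincide, cancel to get $x^k=e$ for some minimal $1\le k\le 8$, and eliminate each value of $k$ separately. Small $k$ give $|\sblp x|\le 3$; for $k=4,5,7$ one shows $x^3x^3=x^6$ via the inverse identities (so Theorem~\thmref{8WellDefined} and Lemma~\lemref{Monogenic} force $|\sblp x|=k<9$); $k=6$ falls to the square-root trick (Corollary~\corref{SquareRoot}); $k=8$ to Lemma~\lemref{EvenOrder}. This organization around ``smallest $k$ with $x^k=e$'' is what makes the bookkeeping tractable, and it proves exactly the statement at hand without needing the stronger conclusion you are aiming for.
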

\begin{proof}
Suppose $x^{n} = x^{n + k}$ for $1 \leq n < n+k \leq 9$. Cancel terms on the left to get $e = x^k$ and consider the smallest possible value of $k$. It is easy to see that if $k = 1,2, \OR 3$, then $|\sblp{x}| = k$, a contradiction. If $k=4$, then $x^3 x^3 = x^3 x^{-1} = x^2 = x^6$ and thus $x^6$ is well-defined. It then follows that $|\sblp{x}| = 4$, a contradiction. If $k=5$, then $x^3 x^3 = x^3 (x^2)^{-1} = x^3 x^{-2} = x = x^6$ and thus $x^6$ is well-defined. Again it follows that $|\sblp{x}| = 5$, a contradiction.

Suppose $k=6$. Multiplying $x^2$ on both sides of $x^6 = e$ gives $x^8 = x^2$. Taking the square root of both sides gives $x^4 = x$ and thus $x^3 = e$, a contradiction.

Suppose $k=7$. We show that $x^n$ is well-defined for all $n$ and by Lemma \lemref{Monogenic}, $\sblp{x}$ is a cyclic group of order $7$, a contradiction. Since $x^7 = x^3 x^4 = e$, $x^3 = (x^4)^{-1}$. Squaring both sides and applying Lemma \lemref{Inverse}, $x^3 x^3 = (x^4 x^4)^{-1} = (x^8)^{-1} = x^{-1} = x^6$. We now have that $x^6$ is well-defined and by Theorem \thmref{8WellDefined} we are done.

Suppose $k=8$. Then $x^8 = x^4 x^4 = e$ and by Lemma \lemref{EvenOrder} $x=e$.

We thus have that $x^9 = e$. Fix $n \geq 9$ and note that $x^n = x \cdot x^{n-1}$. By induction $x^{n-1} = x^{(n-1 \mod n)}$. Thus $x^n = x^{(n \mod 9)}$.

\end{proof}

\begin{lemma}
\lemlabel{Order9Monogenic}
If $Q$ is a cyclic Jordan loop of order 9, then $Q$ is a group.
\end{lemma}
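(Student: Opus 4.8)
The plan is to reduce the entire lemma to the single identity $x^3 x^3 = x^6$. Write $Q = \sblp{x}$. By Lemma~\lemref{Order9MonogenicPowers} we have $x^9 = e$, $x^n = x^{n \bmod 9}$ for all $n \geq 0$, and the nine elements $x^1, \dots, x^9$ are pairwise distinct; in particular $x^{-1} = x^8$ and $x^{-2} = (x^2)^{-1} = x^7$ (the latter since $x^7 x^2 = x^9 = e$ by Lemma~\lemref{Power2Identities}(i)). Granting $x^3 x^3 = x^6$, Theorem~\thmref{8WellDefined} shows that $x^6$, $x^7$, $x^8$ are well-defined, Lemma~\lemref{SmallPowers} shows that $x^3$, $x^4$, $x^5$ are well-defined, and $x^1, x^2$ are well-defined trivially; hence $x^m$ is well-defined for every $1 \leq m \leq 8$, and Lemma~\lemref{Monogenic} (with $n = 9$) then forces $Q = \sblp{x}$ to be a cyclic group, in particular a group.

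Thus everything comes down to $x^3 x^3 = x^6$, which I would obtain by counting within the row of the multiplication table indexed by $x^3$. Using Lemma~\lemref{Power2Identities}(i),(ii) and Lemma~\lemref{Identities2}(ii),(iii), one computes
\[
x^3 e = x^3,\quad x^3 x = x^4,\quad x^3 x^2 = x^5,\quad x^3 x^4 = x^7,\quad x^3 x^7 = x^3 x^{-2} = x,\quad x^3 x^8 = x^3 x^{-1} = x^2,
\]
which are six distinct elements. Since every element of $Q$ occurs exactly once in this row, the three products left over, $x^3 x^3$, $x^3 x^5$, $x^3 x^6$, must be precisely the three remaining elements $e$, $x^6$, $x^8$ in some order; it therefore suffices to rule out $x^3 x^3 = e$ and $x^3 x^3 = x^8$. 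Both are excluded at once: $e = e^2$ and $x^8 = (x^4)^2$ by Lemma~\lemref{Power2Identities}(v), so either possibility would read $(x^3)^2 = y^2$ with $y \in \{e, x^4\}$; since $|Q| = 9$ is odd, $Q$ has unique square roots (Corollary~\corref{SquareRoot}), whence $x^3 = y$, contradicting the distinctness of $x^1, \dots, x^9$. Hence $x^3 x^3 = x^6$.

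The only genuine obstacle is this identity. The Jordan identity and the identities of Section~\secref{order9} let one collapse any product $x^n x^{2^k}$ with $2^k \in \{1, 2, 4, 8\}$, but they give no direct grip on a product of two ``non-dyadic'' powers such as $x^3 x^3$ (indeed, in a general Jordan loop $x^3 x^3 = x^6$ can fail — it is essentially the assertion that $x^6$ is well-defined, cf.\ Theorem~\thmref{NonWellDefinedPowers}), so the hypothesis $|Q| = 9$ must be used essentially; the row count is exactly what channels it — through the distinctness of $x^1, \dots, x^9$ and the uniqueness of square roots — into the required equality. I would be careful to confirm that every power identity invoked above, especially Lemma~\lemref{Identities2}(ii),(iii) and the cited cases of Lemma~\lemref{Power2Identities}, holds unconditionally and not merely under the hypothesis $x^3 x^3 = x^6$; inspection of the proofs in Section~\secref{order9} bears this out. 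A more laborious alternative would dispense with the row count by eliminating each of the eight wrong candidates $x^j$ for $x^3 x^3$ one at a time, exhibiting a known square root of $x^j$ and invoking uniqueness of square roots, but the row count is the economical route.
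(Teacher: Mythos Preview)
Your argument is correct, and the overall strategy matches the paper's: reduce everything to the single equation $x^3 x^3 = x^6$, then pin this down by inspecting the $x^3$-row of the Cayley table together with the uniqueness of square roots in odd order.

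The endgame differs in one useful respect. The paper fills the $x^3$-row using only Lemma~\lemref{Power2Identities}, leaving $x^3 x^7$ and $x^3 x^8$ blank; combining the row, column, and diagonal constraints it narrows the value of $x^3 x^3$ to $\{x, x^6\}$, and then eliminates $x^3 x^3 = x$ by a further Jordan-identity computation, namely $((x^3)^2 x^3)\,x^3 = (x^3)^2(x^3 x^3)$. You instead bring in Lemma~\lemref{Identities2}(ii),(iii) to compute $x^3 x^7 = x^3 x^{-2} = x$ and $x^3 x^8 = x^3 x^{-1} = x^2$, which already excludes $x$ from the row; your remaining candidates $\{e, x^6, x^8\}$ are then killed purely by the square-root argument (equivalently, the diagonal constraint), with no extra Jordan step needed. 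So your route is slightly more economical at the finish, at the cost of invoking one more lemma; the paper's route stays closer to the bare Jordan identity but pays with an extra case to dispatch.
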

\begin{proof}
Let $\sblp{x} = Q$. By Lemma \lemref{Monogenic}, we will be done if we show that $x^k$ is well-defined for $1 \leq k \leq 8$. By Lemma \lemref{SmallPowers} and Theorem \thmref{8WellDefined}, we only need to consider $k=6$. By Lemma \lemref{Order9MonogenicPowers}, we may write every element of $Q$ as $x^k$ for $0 \leq k \leq 8$. We now use Lemma \lemref{Power2Identities} to fill in a partial multiplication table for $Q$ as in Table \ref{TblCyclic9}.
\begin{table}[htdp]
\begin{center}
$
\begin{array}{|c|c|c|c|c|c|c|c|c|}
\hline e   & x   & x^2 & x^3 & x^4 & x^5 & x^6 & x^7 & x^8 \\
\hline x   & x^2 & x^3 & x^4 & x^5 & x^6 & x^7 & x^8 & e   \\
\hline x^2 & x^3 & x^4 & x^5 & x^6 & x^7 & x^8 & e   & x   \\
\hline x^3 & x^4 & x^5 &     & x^7 &     &     &     &     \\
\hline x^4 & x^5 & x^6 & x^7 & x^8 & e   & x   & x^2 & x^3 \\
\hline x^5 & x^6 & x^7 &     & e   &     &     &     & x^4 \\
\hline x^6 & x^7 & x^8 &     & x   &     &     &     & x^5 \\
\hline x^7 & x^8 & e   &     & x^2 &     &     &     &     \\
\hline x^8 & e   & x   &     & x^3 & x^4 & x^5 &     & x^7 \\
\hline \end{array}
$
\end{center}
\caption{Partial multiplication table for $Q$}
\label{TblCyclic9}
\end{table}

Since values cannot repeat in columns, rows, or the main diagonal, $x^3 x^3 = x$ or $x^3 x^3 = x^6$. In the latter case, $x^6$ is well-defined and we are done. Suppose $x^3 x^3 = x$ and note that $(x^3 x^3) x^3 \cdot x^3 = x x^3 \cdot x^3 = x^7$, but by \eqnref{Jordan}, $(x^3 x^3) x^3 \cdot x^3 = x^3 x^3 \cdot x^3 x^3 = x \cdot x = x^2$. Thus $x^7 = x^5$ and $x^2 = e$, a contradiction.

\end{proof}

\begin{theorem}
If $Q$ is a Jordan loop of order $9$, then $Q$ is a group.
\end{theorem}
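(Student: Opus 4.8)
The plan is to assemble the structural lemmas of this section. By Lemma~\lemref{Order9Cases}, a Jordan loop $Q$ of order $9$ is either cyclic (monogenic) or of exponent $3$. In the cyclic case Lemma~\lemref{Order9Monogenic} already shows that $Q$ is a group, so the entire remaining argument concerns a Jordan loop $Q$ of order $9$ and exponent $3$, for which I will prove that $Q$ is a group (necessarily $Q\cong\mathbb{Z}_3\times\mathbb{Z}_3$).

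So assume every non-identity element of $Q$ has order $3$. Then for each $x\ne e$ the triple $\sblp{x}=\{e,x,x^2\}$ is a subgroup isomorphic to $\mathbb{Z}_3$: third powers are well defined by Lemma~\lemref{SmallPowers}, and $x^3\in\sblp{x}$ together with $x^2\ne e,x$ forces $x^3=e$. Since $x$ and $x^2$ determine the same triple, these subgroups partition $Q\setminus\{e\}$ into four pairs, giving subgroups $H_1,\dots,H_4$ that meet pairwise only in $\{e\}$. If $a\in H_i\setminus\{e\}$ and $b\in H_j\setminus\{e\}$ with $i\ne j$, then $\{e,a,a^2,b,b^2\}$ consists of five distinct elements, so $\sblp{a,b}$ has more than $\lfloor 9/2\rfloor$ elements and hence, by Lemma~\lemref{MaxSubSquareSize}, $\sblp{a,b}=Q$; that is, any two elements lying in distinct $H_i$ generate $Q$. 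Fix $a\in H_1\setminus\{e\}$ and $b\in H_2\setminus\{e\}$. A short cancellation argument shows that the product of a non-identity element of $H_1$ with a non-identity element of $H_2$ never lies in $H_1\cup H_2$, so the four mixed products $ab$, $ab^2$, $a^2b$, $a^2b^2$ all lie in $(H_3\cup H_4)\setminus\{e\}$, a set of exactly four elements. Since the Cayley table is a Latin square, these four products form a $2\times 2$ subgrid, so the only possible coincidences among them are $ab=a^2b^2$ and $ab^2=a^2b$, and replacing $b$ by $b^2$ interchanges the two; so up to relabelling there are three cases: (A) the four mixed products are distinct and therefore exhaust $(H_3\cup H_4)\setminus\{e\}$; (B) $ab=a^2b^2$ but $ab^2\ne a^2b$; (C) $ab=a^2b^2$ and $ab^2=a^2b$. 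The main leverage throughout is the remark that, in a commutative loop, \eqnref{Jordan} is equivalent to $L_{x^2}L_x=L_xL_{x^2}$ for all $x$ (equivalently $x^2(xy)=x(x^2y)$), together with the uniqueness of square roots in $Q$ given by Corollary~\corref{SquareRoot}.

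In case (A) one sends $a\mapsto(1,0)$ and $b\mapsto(0,1)$; since $a^ib^j$ then ranges over all nine elements of $Q$ as $(i,j)$ ranges over $\mathbb{Z}_3\times\mathbb{Z}_3$, it remains to show $a^ib^j\cdot a^kb^l=a^{i+k}b^{j+l}$ for all indices, which is forced by commutativity, the group laws inside each $H_m$, the Latin-square constraints, and repeated application of $x^2(xy)=x(x^2y)$; so $Q\cong\mathbb{Z}_3\times\mathbb{Z}_3$. In cases (B) and (C) the mixed products occupy only three, respectively two, of the four slots outside $H_1\cup H_2$, so $Q$ has an element that is not of the form $a^ib^j$; filling the partially determined Cayley table and substituting $x\in\{a,a^2,b,b^2,ab\}$ into \eqnref{Jordan} --- exactly in the spirit of the table argument in the proof of Lemma~\lemref{Order9Monogenic} --- forces an equation of the form $u=u^2$ or $u^2=e$ for some $u\ne e$, contradicting the hypothesis that $Q$ has exponent $3$. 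I expect this degenerate analysis (cases (B) and (C), and also the bookkeeping needed to close case (A)) to be the real obstacle, since there the Latin-square constraints alone do not decide the table and the contradiction must be squeezed out of several carefully chosen instances of the Jordan identity. Once case (A) is settled and cases (B) and (C) are excluded, $Q$ is a group in the exponent-$3$ case as well; combined with the cyclic case, this shows that every Jordan loop of order $9$ is a group.
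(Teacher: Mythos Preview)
Your outline matches the paper's: dispose of the cyclic case via Lemmas \lemref{Order9Cases} and \lemref{Order9Monogenic}, then in the exponent-$3$ case study the $2\times 2$ block of mixed products $ab,ab^2,a^2b,a^2b^2$. But what you have is a plan, not a proof; the steps you flag as ``forced'' or ``the real obstacle'' are exactly where all the content lies, and your case split hides one of the genuine obstructions.

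Your cases (B) and (C) together say that the off-diagonal $2\times 2$ block has a repeated entry. The paper kills this by a pure Latin-square count, with no appeal to \eqnref{Jordan}: if $ab=a^2b^2=c$, then $c$ already occupies its slot in each of the rows $e,a,a^2,b,b^2,c,c^2$, while in rows $d,d^2$ the column-$d$ entries are $d^2$ and $e$; hence $c$ never appears in column $d$. So these cases are easier than your proposed route of hunting for a relation $u=u^2$ via several substitutions into \eqnref{Jordan}.

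The real gap is inside your case (A). Distinctness of the four mixed products does \emph{not} give $(ab)^2=a^2b^2$; one may have $ab$ and $(ab)^2$ in the same row of the block, say $ab=c$ and $ab^2=c^2$. This is the paper's Table~(C) configuration, and it sits squarely inside your case (A), so your bijection $a^ib^j\leftrightarrow(i,j)$ need not respect squaring. The paper eliminates this by using \eqnref{Jordan} to get $cb^2=(ab)b^2=(ab^2)b=c^2b$, observing that Latin-square constraints force this common value into $\{a,a^2\}$, and checking that either choice blocks column $d$. Only then is every off-diagonal block forced into the shape $\begin{smallmatrix}x & y^2\\ y & x^2\end{smallmatrix}$ with $\sblp{x}\ne\sblp{y}$, and the paper still needs two further instances of \eqnref{Jordan} (computing $da=a^2c$ and $d^2b=cb^2$), a unique Latin-square completion, and two last eliminations ($x\ne b^2$, $y\ne a^2$) before the table is pinned down. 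Your sentence ``which is forced by commutativity, the group laws \dots\ and repeated application of $x^2(xy)=x(x^2y)$'' is standing in for all of Tables (C)--(F); without that analysis, case (A) is not closed.
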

\begin{proof}
By Lemmas \lemref{Order9Cases} and \lemref{Order9Monogenic}, we only need to consider the case where $Q$ is of exponent $3$. Let $e \neq a,b,c,d \in Q$ such that $\sblp{a}, \sblp{b}, \sblp{c}, \AND \sblp{d}$ are distinct. A partial multiplication table for $Q$ must be of the form presented in Table (A).

\begin{table}[htdp]
\begin{center}

\begin{tabular}{c c}
$
\begin{array}{|l|l l|l l|l l|l l|}
\hline 
e   & a   & a^2 & b   & b^2 & c   & c^2 & d   & d^2 \\ \hline
a   & a^2 & e   &     &     &     &     &     &     \\ 
a^2 & e   & a   &     &     &     &     &     &     \\ \hline
b   &     &     & b^2 & e   &     &     &     &     \\ 
b^2 &     &     & e   & b   &     &     &     &     \\ \hline
c   &     &     &     &     & c^2 & e   &     &     \\ 
c^2 &     &     &     &     & e   & c   &     &     \\ \hline
d   &     &     &     &     &     &     & d^2 & e   \\ 
d^2 &     &     &     &     &     &     & e   & d   \\ \hline
\end{array}
$
&
$
\begin{array}{|c|c c|c c|c c|c c|}
\hline 
e   & a   & a^2 & b   & b^2 & c   & c^2 & d   & d^2 \\ \hline
a   & a^2 & e   & c   &     &     &     &     &     \\ 
a^2 & e   & a   &     & c   &     &     &     &     \\ \hline
b   & c   &     & b^2 & e   &     &     &     &     \\ 
b^2 &     & c   & e   & b   &     &     &     &     \\ \hline
c   &     &     &     &     & c^2 & e   &     &     \\ 
c^2 &     &     &     &     & e   & c   &     &     \\ \hline
d   &     &     &     &     &     &     & d^2 & e   \\ 
d^2 &     &     &     &     &     &     & e   & d   \\ \hline
\end{array}
$
\\\\
Table (A) & Table (B)
\\
\\
$
\begin{array}{|l|l l|l l|l l|l l|}
\hline 
e   & a   & a^2 & b   & b^2 & c   & c^2 & d   & d^2 \\ \hline
a   & a^2 & e   & c   & c^2 &     &     &     &     \\ 
a^2 & e   & a   & d   & d^2 &     &     &     &     \\ \hline
b   & c   & d   & b^2 & e   &     & a   &     &     \\ 
b^2 & c^2 & d^2 & e   & b   & a   &     &     &     \\ \hline
c   &     &     &     & a   & c^2 & e   &     &     \\ 
c^2 &     &     & a   &     & e   & c   &     &     \\ \hline
d   &     &     &     &     &     &     & d^2 & e   \\ 
d^2 &     &     &     &     &     &     & e   & d   \\ \hline
\end{array}
$
&
$
\begin{array}{|l|l l|l l|l l|l l|}
\hline 
e   & a   & a^2 & b   & b^2 & c   & c^2 & d   & d^2 \\ \hline
a   & a^2 & e   & c   & d^2 &     & x^2 & x   &     \\ 
a^2 & e   & a   & d   & c^2 & x   &     &     & x^2 \\ \hline
b   & c   & d   & b^2 & e   &     & y^2 &     & y   \\ 
b^2 & d^2 & c^2 & e   & b   & y   &     & y^2 &     \\ \hline
c   &     & x   &     & y   & c^2 & e   &     &     \\ 
c^2 & x^2 &     & y^2 &     & e   & c   &     &     \\ \hline
d   & x   &     &     & y^2 &     &     & d^2 & e   \\ 
d^2 &     & x^2 & y   &     &     &     & e   & d   \\ \hline
\end{array}
$
\\\\
Table (C) & Table (D)
\\
\\
$
\begin{array}{|l|l l|l l|l l|l l|}
\hline 
e   & a   & a^2 & b   & b^2 & c   & c^2 & d   & d^2 \\ \hline
a   & a^2 & e   & c   & d^2 & d   & x^2 & x   & c^2 \\ 
a^2 & e   & a   & d   & c^2 & x   & d^2 & c   & x^2 \\ \hline
b   & c   & d   & b^2 & e   & d^2 & y^2 & c^2 & y   \\ 
b^2 & d^2 & c^2 & e   & b   & y   & d   & y^2 & c   \\ \hline
c   & d   & x   & d^2 & y   & c^2 & e   & x^2 & y^2 \\ 
c^2 & x^2 & d^2 & y^2 & d   & e   & c   & y   & x   \\ \hline
d   & x   & c   & c^2 & y^2 & x^2 & y   & d^2 & e   \\ 
d^2 & c^2 & x^2 & y   & c   & y^2 & x   & e   & d   \\ \hline
\end{array}
$
&
$
\begin{array}{|l|l l|l l|l l|l l|}
\hline 
e   & a   & a^2 & b   & b^2 & c   & c^2 & d   & d^2 \\ \hline
a   & a^2 & e   & c   & d^2 & d   & b^2 & b   & c^2 \\ 
a^2 & e   & a   & d   & c^2 & b   & d^2 & c   & b^2 \\ \hline
b   & c   & d   & b^2 & e   & d^2 & a^2 & c^2 & a   \\ 
b^2 & d^2 & c^2 & e   & b   & a   & d   & a^2 & c   \\ \hline
c   & d   & b   & d^2 & a   & c^2 & e   & b^2 & a^2 \\ 
c^2 & b^2 & d^2 & a^2 & d   & e   & c   & a   & b   \\ \hline
d   & b   & c   & c^2 & a^2 & b^2 & a   & d^2 & e   \\ 
d^2 & c^2 & b^2 & a   & c   & a^2 & b   & e   & d   \\ \hline
\end{array}
$
\\\\
Table (E) & Table (F)
\end{tabular}

\end{center}
\end{table}

Suppose an off-diagonal $2 \times 2$ square of Table (A) contains a repeated element. Without loss of generality, we are in the case presented in Table (B). While the column indexed by $d$ must contain the element $c$, there is no available row that can contain this occurrence of $c$. Thus, every off-diagonal 2x2 square in Table (A) must contain four distinct elements.

Suppose an off-diagonal $2 \times 2$ square of Table (A) contains both an element and its square in the same row or column. Without loss of generality, we are in the case presented in Table (C). Let $y := c \cdot b^2 = ab \cdot b^2 = a b^2 \cdot b = c^2 b$. Notice that either $y = a \OR y = a^2$. If $y = a$, then the column indexed by $d$ must contain the element $a$ but there are no available rows to contain this occurrence of $a$. Likewise for $y = a^2$.

Thus every off-diangonal $2 \times 2$ square in Table (A) is of the form
\begin{center}
$
\begin{array}{|c|c|}
\hline 
x & y^2 \\ \hline
y & x^2 \\ \hline
\end{array}
$
\end{center}
for $\sblp{x} \neq \sblp{y}$.

Without loss of generality, we can assume that the $(a,a^2) \times (b,b^2)$ square is arranged as in Table (D). Set $x := d a = a^2 b \cdot a = a^2 \cdot ab = a^2 c$ and $y := d^2 b = a b^2 \cdot b = a b \cdot b^2 = c b^2$ as has been done in Table (D). It is easy to see that Table (E) is the unique quasigroup completion of Table (D).

Note that $\{x, x^2\} = \{b,b^2\} \AND \{y, y^2\} = \{a,a^2\}$. Suppose $x = b^2$. Then $d^2 = b^2 a = a^2 c \cdot a = a^2 \cdot a c = a^2 d = c$, a contradiction, and thus $x = b$. Suppose $y = a^2$. Then $c = b^2 d^2 = b^2 \cdot c b = b^2 c \cdot b = a^2 b = d$, a contradiction, and thus $y = a$.

Therefore, Table (F) must be a multiplication table for $Q$. Furthermore, since we only made labeling choices when completing the table, up to isomorphism, this is the only possible multiplication table for a Jordan loop of order 9 and exponent 3. Therefore, it must be the multiplication table for $Z_3 \times Z_3$.

\end{proof}

\bibliographystyle{plain}

\end{document}